\documentclass[10pt, reqno]{amsart}
\usepackage{eurosym}
\usepackage{amsmath, amsthm, amscd, amsfonts, amssymb, graphicx, color}
\usepackage{xcolor}
\usepackage[colorlinks=true, linkcolor=red, citecolor=red, urlcolor=red]{hyperref}

\newtheorem{theorem}{Theorem}[section]
\newtheorem{lemma}[theorem]{Lemma}

\theoremstyle{definition}

\newtheorem{example}[theorem]{Example}

\theoremstyle{remark}
\newtheorem{remark}[theorem]{Remark}
\numberwithin{equation}{section}

\begin{document}
\title[Coefficient Difference]{Invariance of the Initial Coefficient
Differences of Ma-Minda Convex Functions}
\author[ U.Raza, M. Raza, R. Ali]{ Umar Raza$^{1}$, Mohsan Raza$^{2,\ast }$, Rashid
Ali$^{2}$}
\address{$^{1}$Department of Mathematics, University of Jhang, Pakistan.}
\email{umarraza55@gmail.com }
\address{$^{2}$Department of Mathematics, Government College University
Faisalabad, Pakistan.}
\email{mohsan976@yahoo.com}
\email{rashidali5277@gmail.com}
\keywords{Analytic functions, coefficient difference, Ma-Minda class,
functions with positive real part}
\date{Received June 11, 2026.\\
\indent$^{\ast }$ Corresponding author\\
2020\textit{\ Mathematics Subject Classification. }30C45, 30C50.}

\begin{abstract}
Let $\Phi $ be a univalent function in $\mathbb{D}=\{z\in \mathbb{C}:|z|<1\}$%
, $\Phi (\mathbb{D})$ is symmetric with respect to the real axis, starlike
with respect to $\Phi (0)=1$, and $\Phi ^{\prime }(0)>0$. Let $\mathcal{C}%
(\Phi )$ denote the class of Ma-Minda convex functions. In this article, we
present the bounds on $||a_{3}|-|a_{2}||$ for Taylor's coefficients of the
function $f$ in the class $\mathcal{C}(\Phi )$. We also establish the same
bounds for the inverse coefficients. All the bounds we study here are sharp.
We also present the conditions such that the bounds on $|a_{3}|-|a_{2}||$ and 
$|A_{3}|-|A_{2}||$ are invariant, where $A_{2}$ and $A_{3}$ are the first
two coefficients of the Taylor series of the inverse functions of $f\in 
\mathcal{C}(\Phi ).$ Thus provides examples of invariance and nonvariance
among the subclasses of convex functions.
\end{abstract}

\maketitle


\setcounter{page}{1} 

\section{Introduction}

Let $\mathcal{A}$ denote the class of analytic functions $f$ defined in the
open unit disk $\mathbb{D}=\{z\in \mathbb{C}:|z|<1\}$, normalized by the
conditions $f(0)=0$ and $f^{\prime }(0)=1$, and having the Taylor expansion
of the form 
\begin{equation}
f(z)=z+\sum_{n=2}^{\infty }a_{n}z^{n}.  \label{1}
\end{equation}%
Let $\mathcal{S}$ represent a subclass of functions in $\mathcal{A}$ that
are univalent(one-to-one) in $\mathbb{D}$. In 1985, de Branges \cite{c2}
settled the celebrated Bieberbach conjecture by proving that for every
function $f\in \mathcal{S}$ given in the form $\left( \ref{1}\right) $, the
sharp estimate $|a_{n}|\leq n,n\geq 2,$ holds, with equality attained only
by the Koebe function $k(z)=z/(1-z)^{2}$ or its rotations. This remarkable
breakthrough naturally led to further investigations concerning the behavior
of successive coefficients of univalent functions. In particular, it became
interesting to ask whether the inequality 
\begin{equation*}
\big||a_{n+1}|-|a_{n}|\big|\leq 1,\qquad n\geq 2,
\end{equation*}%
is valid for all $f\in \mathcal{S}$. However, it was soon observed that this
is not true even for $n=2$. In fact, it was shown in \cite{c1} that the
following sharp bounds hold: 
\begin{equation*}
-1\leq |a_{3}|-|a_{2}|\leq \frac{3}{4}+e^{-\lambda _{0}}\left( 2e^{-\lambda
_{0}}-1\right) =1.029\cdots ,
\end{equation*}%
where $\lambda _{0}$ is the unique root in $0<\lambda <1$ of the equation $%
4\lambda =e^{\lambda }.$ Later, Hayman \cite{c3} established an important
general result by proving that there exists an absolute constant $C>0$ such
that 
\begin{equation*}
\big||a_{n+1}|-|a_{n}|\big|\leq C,\qquad n\geq 2,
\end{equation*}%
for all functions $f\in \mathcal{S}$. Hayman's original proof relied on his
powerful method developed for the study of areally mean $p$-valent functions 
\cite{c5}. An alternative approach was later provided by Milin through the
celebrated Lebedev--Milin inequalities, and a detailed exposition of these
developments can be found in Duren's classical monograph \cite{c1}. Despite
the significance of Hayman's estimate, little progress has been achieved in
determining sharper bounds for $C$. Ilina \cite{lina} proved in 1968 that $%
C<4.26\cdots $, and subsequently Grispan \cite{c4}, refining Milin's method,
improved this in 1976 by establishing that for $n\geq 2$, 
\begin{equation*}
-2.97\cdots <|a_{n+1}|-|a_{n}|<3.61\cdots .
\end{equation*}%
For many years, no further improvements were reported until the recent
contribution of Obradovi\'{c} et al \cite{ob}, who employed Grunsky
inequalities to obtain a sharper estimate for the case $n=3$, namely, 
\begin{equation*}
|a_{4}|-|a_{3}|\leq 2.1033\cdots .
\end{equation*}

Thus, except for the sharp bounds known in the case $n=2$, no sharp upper or
lower estimates are currently available for the difference $%
|a_{n+1}|-|a_{n}| $ when $n\geq 3$ for functions belonging to the class $%
\mathcal{S}$.

The coefficient problem for some subclasses of the class $\mathcal{S}$ is
settled by several authors. We first define some well-known subclasses of
the class $\mathcal{S}$.

An analytic function $f$ is called \emph{starlike} if the image domain $f(%
\mathbb{D})$ is starlike with respect to the origin. The class of all
univalent starlike functions is denoted by $\mathcal{S}^{\ast }$.
Analytically, a function $f$ belongs to $\mathcal{S}^{\ast }$ if and only if 
\begin{equation*}
\Re \left( \frac{zf^{\prime }(z)}{f(z)}\right) >0,\qquad z\in \mathbb{D}.
\end{equation*}%
Similarly, an analytic function $f$ is called \emph{convex} if the image
domain $f(\mathbb{D})$ is convex. The class of all univalent convex
functions is denoted by $\mathcal{C}$. Analytically, a function $f$ belongs
to $\mathcal{C}$ if and only if 
\begin{equation*}
\Re \left( 1+\frac{zf^{\prime \prime }(z)}{f^{\prime }(z)}\right) >0,\qquad
z\in \mathbb{D}.
\end{equation*}%
A function $f$ is said to be \emph{close-to-convex }if and only if there
exists $g\in \mathcal{S}^{\ast }$ such that 
\begin{equation*}
\Re \left( \frac{zf^{\prime }(z)}{g(z)}\right) >0,\qquad z\in \mathbb{D}.
\end{equation*}%
The class of all close-to-convex functions in $\mathbb{D}$ is denoted by $%
\mathcal{K}$. In 1973, Pommerenke \cite{c5} conjectured that for $f\in 
\mathcal{S}^{\ast }$, $\bigl||a_{n+1}|-|a_{n}|\bigr|\leq 1,\ n\geq 2,$ and
in 1978, Leung \cite{leu} proved this conjecture by showing that equality is
attained for the function 
\begin{equation*}
f(z)=\frac{z}{(1-\rho z)(1-\sigma z)},\qquad |\rho |=|\sigma |=1.
\end{equation*}
In 1985, Koepf \cite{koepf} showed that for $f\in \mathcal{K}$, $\bigl|%
|a_{n+1}|-|a_{n}|\bigr|\leq 1$ for $n=2$ and it is an open problem for $%
n\geq 3.$ It seems to be a challenging problem to find the sharp upper and
lower bounds for $\bigl||a_{n+1}|-|a_{n}|\bigr|$, when $f\in \mathcal{C}$,
that is, for the convex functions. The only noteworthy results to date are
attributed to Ming and Sugawa \cite{Ming}, where sharp upper bounds have
been discovered when $n\geq 2$ and has sharp lower bounds when $n=2,3$.
Determining the sharp lower bounds for $n\geq 4$ is an open problem. By
following this trend many other authors established the bounds on the
difference of initial coefficients of different subclasses of univalent
functions. Cho et al. \cite{cho} presented the bounds on $||a_{3}|-|a_{2}||$
for the class of Bazilevic functions and for the class of non-Bazilevi\v{c}
functions the bounds on $||a_{3}|-|a_{2}||$ were recently calculated in \cite%
{ali}. Moreover, for the class $\mathcal{U}(\alpha ,\lambda )$ the bounds on
the difference of initial coefficients were established in \cite{Raza}.

Now, let $\mathcal{B}$ represent the class of all analytic (holomorphic)
functions $\omega $ in $\mathbb{D}$ with the property that $\omega (0)=0$
and $|\omega (z)|<1$ for $z\in \mathbb{D}$. These functions are called
Schwarz functions. A number of problems in geometric function theory can be
answered in an easy and precised way by using the concept of subordination.
An analytic function $f$ is said to be subordinate to some other analytic
function $g$ if there exists $\omega \in \mathcal{B}$ such that $%
f(z)=g\left( \omega (z)\right) $ for $z\in \mathbb{D}$. In the case, if $g$
is univalent and $f(0)=g(0)$, then $f(\mathbb{D})\subset g(\mathbb{D})$.

In order to unify and generalize many well-known subclasses of convex and
starlike functions, Ma and Minda \cite{11} introduced a broad family of
analytic function classes associated with a suitable analytic function $\Phi 
$. Let $\Phi $ be an analytic and univalent function in $\mathbb{D}$ such
that $\Phi (\mathbb{D})$ is symmetric with respect to the real axis,
starlike with respect to $\Phi (0)=1$, and satisfies $\Phi ^{\prime }(0)>0$.
The class of Ma-Minda convex functions, denoted by $\mathcal{C}(\Phi )$,
consists of those functions $f\in \mathcal{A}$ for which 
\begin{equation}
1+\frac{zf^{\prime \prime }(z)}{f^{\prime }(z)}\prec \Phi (z),\qquad z\in 
\mathbb{D},
\end{equation}%
where the function $\Phi (z)$ has series expansion of the
form 
\begin{equation*}
\Phi (z)=1+B_{1}z+B_{2}z^{2}+B_{3}z^{3}+\ldots .
\end{equation*}%
%
%
%
%
%
%
%
%
%
%
The Koebe's $1/4$ theorem states that there exists an inverse function $%
f^{-1}$ for every univalent function $f$ defined in $\mathbb{D}$, at least on
the disk with a radius 1/4, having the series expansion as follows%
\begin{equation}
f^{-1}(z)=z+\sum_{n=2}^{\infty }A_{n}z^{n}.  \label{equ34}
\end{equation}%
Since $f(f^{-1}(z))=z$, so from (\ref{1}) and (\ref{equ34}), we have 
\begin{equation}
A_{2}=-a_{2},\ \ \ \text{and}\ \ \ A_{3}=2a_{2}^{2}-a_{3}.  \label{A2}
\end{equation}%
Libera and Zlotkiewicz \cite{lib1} were the first to demonstrate that, for
the class $\mathcal{C}$ of convex functions, the inverse coefficients $A_{n}$
retain the traditional inequality $\left\vert a_{n}\right\vert \leq 1$ when $%
2\leq n\leq 7$. Some invariance features among the family of strongly convex
functions were demonstrated by Thomas and Verma \cite{DK} in 2016. In \cite%
{raza4}, the invariance property between the coefficient functionals of the
subclass of convex functions associated with sigmoid functions is examined.
Thomas recently provided a thorough explanation of this characteristic for a
few coefficient functions for the class of convex functions and its
subclasses. This characteristic is questioned for the subclass of convex
functions associated with the cardioid domain, see \cite{raza3}.

Many authors have recently explored coefficient bounds for inverse functions
(see \cite{m30,m37}). Specifically, Sim and Thomas showed that $-1\leq
|A_{3}|-|A_{2}|\leq 3$ for $f\in \mathcal{S}$, \cite{m30}. They also studied
the sharp bounds on coefficient differences for some other subclasses of
univalent functions.\newline
In this article, we present the bounds on $||a_{3}|-|a_{2}||$ for the class $%
\mathcal{C}(\Phi )$. We also establish the bounds on $||A_{3}|-|A_{2}||$.
The bounds being presented here are sharp.

\section{Main Results}

The Caratheodory class, denoted by $\mathcal{P}$, is the collection of
holomorphic functions $p$ in the unit disk $\mathbb{D}=\left\{ z\in \mathbb{C%
}:|z|<1\right\} $ satisfying the condition $Re\left( p(z)\right) >0$ for $%
z\in \mathbb{D}$ and having series expansion of the form 
\begin{equation*}
p(z)=1+\sum_{n=1}^{\infty }p_{n}z^{n}.
\end{equation*}%
We use the following lemma to prove our results.

\begin{lemma}
\label{lemma1}\cite{19} Let $p\in \mathcal{P}$. Then 
\begin{equation*}
p_{1}=2t_{1},
\end{equation*}%
and 
\begin{equation*}
p_{2}=2t_{1}^{2}+2(1-|t_{1}|^{2})t_{2},
\end{equation*}%
where $t_{i}\in \overline{\mathbb{D}}$ for $i\in \left\{ 1,2\right\} $. If $%
|t_{1}|=1$, then there exists a unique function $p\in \mathcal{P}$ given as 
\begin{equation*}
p(z)=\frac{1+t_{1}z}{1-t_{1}z}.
\end{equation*}%
If $t_{1}\in \mathbb{D}$ and $|t_{2}|=1$, then there exists a unique
function $p\in \mathcal{P}$ defined as 
\begin{equation}
p(z)=\frac{1+(\overline{t_{1}}t_{2}+t_{1})z+t_{2}z^{2}}{1+(\overline{t_{1}}%
t_{2}-t_{1})z-t_{2}z^{2}}.  \label{eq001}
\end{equation}
\end{lemma}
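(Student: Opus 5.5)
This lemma is the classical parametrization of the first two Carathéodory coefficients, and I would derive it through the Schur algorithm. Write $p\in\mathcal{P}$ as
\[
p(z)=\frac{1+\omega(z)}{1-\omega(z)}, \qquad \omega\in\mathcal{B}.
\]
Then $\omega(z)=z\,\psi(z)$ with $\psi$ analytic and $|\psi|\le 1$ on $\mathbb{D}$. Put $t_1=\psi(0)$.

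If $|t_1|<1$, apply one Schur step:
\[
\psi(z)=\frac{t_1+z\psi_1(z)}{1+\overline{t_1}z\psi_1(z)}, \qquad |\psi_1|\le 1,
\]
and put $t_2=\psi_1(0)$. Expanding $\psi(z)=t_1+(1-|t_1|^2)t_2z+O(z^2)$ gives
\[
\omega(z)=t_1z+(1-|t_1|^2)t_2z^2+\cdots .
\]
Substitute this into
\[
p=1+2\omega+2\omega^2+\cdots
\]
and compare coefficients. This gives $p_1=2t_1$ and $p_2=2(1-|t_1|^2)t_2+2t_1^2$. If $|t_1|=1$, the same formulas hold with any $t_2$, since the factor $1-|t_1|^2$ vanishes. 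Conversely, every pair $(t_1,t_2)\in\overline{\mathbb{D}}^2$ occurs: take $\psi_1\equiv t_2$ and reverse the construction.

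For the uniqueness statements I would use the maximum principle. If $|t_1|=1$, then $|\psi(0)|=1$, so $\psi\equiv t_1$ is constant. Hence $\omega(z)=t_1z$ and $p=(1+t_1z)/(1-t_1z)$.

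If $|t_1|<1$ and $|t_2|=1$, then $\psi_1\equiv t_2$. This gives
\[
\omega(z)=z\,\frac{t_1+t_2z}{1+\overline{t_1}t_2z}.
\]
Then
\[
p=\frac{1+\omega}{1-\omega}
=\frac{1+\overline{t_1}t_2z+t_1z+t_2z^2}{1+\overline{t_1}t_2z-t_1z-t_2z^2},
\]
which is exactly (\ref{eq001}). In both cases $p$ is forced by $(t_1,t_2)$, and this is the uniqueness.

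The only delicate point is bookkeeping rather than difficulty. One must treat the case $|t_1|=1$, where the Schur step is undefined, separately. One must also confirm that the correspondence $p\leftrightarrow\omega\leftrightarrow\psi\leftrightarrow\psi_1$ is bijective, so that every $t_1,t_2\in\overline{\mathbb{D}}$ is realized. Both follow from the Schwarz lemma and the maximum modulus principle.
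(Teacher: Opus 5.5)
Your proof is correct and complete. The one Schur step on $\psi=\omega/z$ gives $\omega(z)=t_1z+(1-|t_1|^2)t_2z^2+\cdots$, and the coefficient comparison from $p=1+2\omega+2\omega^2+\cdots$ is right. The algebra producing \eqref{eq001} checks out, and the maximum modulus principle handles both uniqueness statements.

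The paper gives no argument of its own for Lemma~\ref{lemma1}; it quotes it from \cite{19}. So there is no in-paper proof to compare with. Your derivation is self-contained and shows that $t_1,t_2$ are exactly the first two Schur parameters of $\omega(z)/z$.

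The classical alternative is the Carath\'eodory--Toeplitz route, as in Libera--Zlotkiewicz \cite{lib1}. There, $p_2=\tfrac12\bigl(p_1^2+(4-|p_1|^2)x\bigr)$ with $|x|\le 1$ comes from nonnegativity of a Toeplitz determinant. Uniqueness comes from that determinant vanishing, which forces $p$ to be a finite combination of half-plane maps.

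Your route avoids the determinant machinery and yields the extremal functions directly. It also gives the converse: every pair $(t_1,t_2)\in\overline{\mathbb{D}}^2$ is realized by some $p\in\mathcal{P}$. The lemma does not state this, but the paper relies on it in its sharpness constructions, where it picks $t_1,t_2$ and plugs them into \eqref{eq001}.

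Two small presentational points:
\begin{itemize}
\item In the case $|t_1|=1$, the identity $p_2=2t_1^2$ needs $\psi\equiv t_1$. You only prove this later, in the uniqueness paragraph, so state it where you first use it.
\item For the uniqueness claim with $|t_1|<1$ and $|t_2|=1$, say explicitly that any $p\in\mathcal{P}$ with the prescribed $p_1,p_2$ has Schur parameter $\psi_1(0)=(p_2-2t_1^2)/\bigl(2(1-|t_1|^2)\bigr)=t_2$. It is $|\psi_1(0)|=1$ that forces $\psi_1\equiv t_2$.
\end{itemize}
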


In our first result, we establish the sharp bound on the difference of
initial coefficients $||a_{3}|-|a_{2}||$.

\begin{theorem}
\label{Th1}Let $f\in \mathcal{C}(\Phi )$ be of the form \eqref{1} with $%
B_{1}>0$. Then 
\begin{equation*}
|a_{3}|-|a_{2}|\leq 
\begin{cases}
\frac{\left\vert B_{1}^{2}+B_{2}\right\vert -3B_{1}}{6}, & \text{if }\
|B_{1}^{2}+B_{2}|\geq 4B_{1}, \\ 
\frac{B_{1}}{6}, & \text{if }\ |B_{1}^{2}+B_{2}|<4B_{1},%
\end{cases}%
\end{equation*}%
and 
\begin{equation*}
|a_{3}|-|a_{2}|\geq 
\begin{cases}
\frac{\left\vert B_{1}^{2}+B_{2}\right\vert -3B_{1}}{6}, & \text{if }\
B_{1}\geq 2|B_{1}^{2}+B_{2}|, \\[8pt] 
-\frac{B_{1}}{2}\sqrt{\frac{B_{1}}{B_{1}+|B_{1}^{2}+B_{2}|}}, & \text{if }\
5B_{1}\leq 4|B_{1}^{2}+B_{2}|, \\[8pt] 
-\frac{B_{1}}{6}-\frac{3B_{1}^{2}}{8{|B_{1}^{2}+B_{2}|}+8{B_{1}}}, & \text{%
otherwise}.%
\end{cases}%
\end{equation*}%
These inequalities are sharp.
\end{theorem}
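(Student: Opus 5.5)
Express $a_2,a_3$ through the Carathéodory parametrization of Lemma \ref{lemma1}, reduce $|a_3|-|a_2|$ to a one-variable problem in $x=|t_1|\in[0,1]$, and optimize that problem piecewise.

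\emph{Coefficient formulas.} Since $f\in\mathcal{C}(\Phi)$, there is $\omega\in\mathcal{B}$ with $1+zf''(z)/f'(z)=\Phi(\omega(z))$. I write $\omega=(p-1)/(p+1)$ with $p\in\mathcal{P}$. If $\omega(z)=c_1z+c_2z^2+\cdots$, then $c_1=p_1/2$ and $c_2=p_2/2-p_1^2/4$. Lemma \ref{lemma1} then gives
\[
c_1=t_1,\qquad c_2=(1-|t_1|^2)t_2 .
\]
Comparing coefficients in
\[
1+2a_2z+(6a_3-4a_2^2)z^2+\cdots=1+B_1c_1z+(B_1c_2+B_2c_1^2)z^2+\cdots
\]
yields
\[
a_2=\frac{B_1t_1}{2},\qquad a_3=\frac{(B_1^2+B_2)t_1^2+B_1(1-|t_1|^2)t_2}{6}.
\]
Put $K=|B_1^2+B_2|$ and $x=|t_1|\in[0,1]$.

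\emph{Upper bound.} The triangle inequality and $|t_2|\le1$ give
\[
|a_3|-|a_2|\le F(x):=\frac{(K-B_1)x^2-3B_1x+B_1}{6}.
\]
If $K\le B_1$, then $F$ is concave with $F'(0)=-B_1/2<0$, so $F$ is decreasing and $\max F=F(0)=B_1/6$. If $K>B_1$, then $F$ is convex and its maximum is $\max\{F(0),F(1)\}=\max\{B_1/6,(K-3B_1)/6\}$. The second value is the larger exactly when $K\ge4B_1$, which gives both cases of the upper bound.

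For sharpness I take $t_1=0$, $t_2=1$ (so $p(z)=(1+z^2)/(1-z^2)$ by \eqref{eq001}) for the value $B_1/6$. For the value $(K-3B_1)/6$ I take $t_1=1$, i.e. $\omega(z)=z$; then $|a_3|=K/6$ and $|a_2|=B_1/2$.

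\emph{Lower bound.} For fixed $x$, choosing $t_2$ of modulus one with phase opposite to $(B_1^2+B_2)t_1^2$ minimizes $|a_3|$, and in general
\[
|a_3|\ge\frac{\max\{0,(K+B_1)x^2-B_1\}}{6}.
\]
Hence $|a_3|-|a_2|\ge g(x)$, where $g$ is piecewise with breakpoint $x_0=\sqrt{B_1/(K+B_1)}$.

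On $[0,x_0]$ we have $g(x)=-B_1x/2$, which is decreasing, so its minimum there is
\[
g(x_0)=-\frac{B_1}{2}\sqrt{\frac{B_1}{B_1+K}}.
\]
On $[x_0,1]$ the function $g(x)=\frac{(K+B_1)x^2-B_1}{6}-\frac{B_1x}{2}$ is a convex quadratic with vertex $x^*=\frac{3B_1}{2(K+B_1)}$. There are three cases.
\begin{itemize}
\item If $x^*\ge1$, i.e. $B_1\ge2K$, the minimum on $[x_0,1]$ is at $x=1$, giving $(K-3B_1)/6$.
\item If $x^*\le x_0$, i.e. $9B_1\le4(K+B_1)$, equivalently $5B_1\le4K$, the minimum is at $x_0$. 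This is the square-root value.
\item Otherwise the minimum is attained at $x^*$. There $g(x^*)=-\frac{B_1}{6}-\frac{3B_1^2}{8(K+B_1)}$, which is the remaining case.
\end{itemize}
In each case this value is at most $g(x_0)$, so it is the global minimum of $g$ on $[0,1]$.

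For sharpness I take $t_1=x$ real, equal to the relevant minimizer $1$, $x_0$ or $x^*$. When $x<1$ I take $|t_2|=1$ with the cancelling phase, which produces an extremal $p$ via \eqref{eq001}. When $x=1$ I take $p=(1+z)/(1-z)$, up to the rotation needed to match phases.

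\emph{Main obstacle.} The step needing the most care is the lower-bound case analysis. One must check that the threshold comparisons of $x^*$ against $x_0$ and against $1$ match the stated conditions exactly, and that the minimizing $t_2$ really achieves $|a_3|=\max\{0,\cdot\}/6$ at the chosen $x$, so that the bound is sharp. The three stated conditions are not literally disjoint: $B_1\ge2K$ and $5B_1\le4K$ cannot both hold, and the "otherwise" case is their complement. This partition is consistent with the analysis above.
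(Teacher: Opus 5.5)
Your proof is correct and follows the same route as the paper: the Lemma~\ref{lemma1} parametrization, reduction to $x=|t_1|\in[0,1]$, triangle and reverse triangle inequalities, and piecewise quadratic optimization with exactly the paper's thresholds. Your $x_0$ and $x^*$ are the paper's $\zeta_1$ (at $r=1$) and $\zeta_2$, and your conditions $5B_1\le 4K$ and $B_1\ge 2K$ are its conditions $a^2\le 2c(m+2c)$ and $a\ge m+2c$.

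The only difference is organizational. You minimize over $t_2$ first, which gives the one-variable function $g(x)=\max\{0,(K+B_1)x^2-B_1\}/6-B_1x/2$, whereas the paper carries $r=|t_2|$ through a two-variable analysis of $\varphi(t_1,r)$. Your ordering is cleaner, and it sidesteps the paper's step deriving $\partial\varphi/\partial t_1\ge 0$ from $a^2\le 2c(m+2c)$; for $r<1$ that step would actually need $a^2\le 2rc(m+2rc)$.
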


\begin{proof}
Since $f\in \mathcal{C}(\Phi )$, then there exists a
function $w\in \mathcal{B}$ such that 
\begin{equation}
1+\frac{zf^{\prime \prime }(z)}{f^{\prime }(z)}=\Phi \left( w(z)\right) .
\label{ma}
\end{equation}%
Let $p\in \mathcal{P}$. Then 
\begin{equation}
p(z)=\frac{1+w(z)}{1-w(z)}=1+p_{1}z+p_{2}z^{2}+p_{3}z^{3}+\cdots .  \label{p}
\end{equation}%
Now, by comparing the coefficients of \eqref{ma} and \eqref{p}, we have 
\begin{equation}
a_{2}=\frac{B_{1}}{4}p_{1},\quad \quad a_{3}:=\left( \frac{%
-B_{1}+B_{1}^{2}+B_{2}}{24}\right) p_{1}^{2}+\frac{B_{1}}{12}p_{2}.
\label{a2}
\end{equation}%
By using \eqref{a2}, we get 
\begin{equation*}
|a_{3}|-|a_{2}|=|bp_{1}^{2}+cp_{2}|-|ap_{1}|,
\end{equation*}%
with 
\begin{equation*}
a:=\frac{B_{1}}{4},\quad \quad b:=\left( \frac{-B_{1}+B_{1}^{2}+B_{2}}{24}%
\right) ,\quad \text{and}\quad c:=\frac{B_{1}}{12}.
\end{equation*}%
By using Lemma \ref{lemma1}, we get 
\begin{align*}
|a_{3}|-|a_{2}|& ={|4bt_{1}^{2}+c(2t_{1}^{2}+2(1-{|t_{1}|}^{2})t_{2})|}-2{%
|at_{1}|} \\
& \leq {m|t_{1}|}^{2}+2{c}\left( 1-{|t_{1}|}^{2}\right) |t_{2}|-2{|at_{1}|},
\end{align*}%
where $m:=|4b+2c|$. Since $\mathcal{P}$ is rotationally invariant, so we can
assume, $t_{1}\in \lbrack 0,1]$ and by using the fact that $|t_{2}|\leq 1$,
we have 
\begin{equation*}
|a_{3}|-|a_{2}|\leq \left( {m}-2{c}\right) t_{1}^{2}-2{a}t_{1}+2{c}=\Upsilon
(x),
\end{equation*}%
where 
\begin{equation}
\Upsilon (x)=k_{2}x^{2}+k_{1}x+k_{0},  \label{equ2}
\end{equation}%
with 
\begin{equation*}
k_{2}={m}-2{c},\quad \quad k_{1}=-2{a},\quad \quad k_{0}=2{c}.
\end{equation*}%
Here we have two cases:\newline
$\mathbf{A}_{1}:$ If $k_{2}\leq 0$. Given that $k_{1}<0$, thus by (\ref{equ2}
) we get, $\Upsilon ^{\prime }(x)=2k_{2}x+k_{1}<0$ and it yields that $%
\Upsilon $ is a decreasing function. Therefore 
\begin{equation*}
\Upsilon (x)\leq \Upsilon (0)=k_{0}=2c=\frac{B_{1}}{6}.
\end{equation*}%
$\mathbf{A}_{2}:$ Now if $k_{2}>0$, then $\Upsilon $ is a quadratic function
and it has positive leading coefficient so, 
\begin{equation*}
\Upsilon (x)\leq \max \{\Upsilon (0),\Upsilon (1)\}.
\end{equation*}%
Let $k_{2}+k_{1}<0$. Then $\Upsilon (1)=k_{2}+k_{1}+k_{0}<k_{0}=\Upsilon (0)$
and thus $\Upsilon (x)\leq \Upsilon (0)=2c=\frac{B_{1}}{6}$. Further, let $%
k_{2}+k_{1}\geq 0$. Then $\Upsilon (1)=k_{2}+k_{1}+k_{0}\geq k_{0}=\Upsilon
(0)$. Thus 
\begin{equation*}
\Upsilon (x)\leq \Upsilon (1)=k_{2}+k_{1}+k_{0}=|4b+2c|-2a=\frac{\left\vert
B_{1}^{2}+B_{2}\right\vert -3B_{1}}{6}.
\end{equation*}%
From above discussion, we have 
\begin{equation*}
|a_{3}|-|a_{2}|\leq 
\begin{cases}
\frac{\left\vert B_{1}^{2}+B_{2}\right\vert -3B_{1}}{6}, & \text{if }\
|B_{1}^{2}+B_{2}|\geq 4B_{1}, \\ 
\frac{B_{1}}{6}, & \text{if }\ |B_{1}^{2}+B_{2}|<4B_{1}.%
\end{cases}%
\end{equation*}%
The bound is sharp for the function $f$ defined by \eqref{ma} with $%
p(z)=(1+z)/(1-z)$, when $|B_{1}^{2}+B_{2}|\geq 4B_{1}$ and for $%
|B_{1}^{2}+B_{2}|<4B_{1}$ the equality exists with $p(z)=(1+z^{2})/(1-z^{2})$%
.\newline
Now, we will find the lower bound on $|a_{3}|-|a_{2}|$.\newline
By using Lemma \ref{lemma1}, we get 
\begin{equation*}
|a_{3}|-|a_{2}|=\left\vert me^{i\theta }t_{1}^{2}+2ce^{i\phi
_{1}}(1-|t_{1}|^{2})t_{2}\right\vert -2|at_{1}|,
\end{equation*}%
where 
\begin{equation*}
m=\left\vert 4b+2c\right\vert ,\quad \quad \theta =arg(4b+2c),\quad \quad
\phi _{1}=arg(2c).
\end{equation*}%
Since $\mathcal{P}$ is rotationally invariant so we can assume, $t_{1}\in
\lbrack 0,1]$ and $t_{2}=re^{i\phi _{2}}$ with $r\in \lbrack 0,1]$. Thus 
\begin{equation*}
|a_{3}|-|a_{2}|=\left\vert me^{i\theta }t_{1}^{2}+2rce^{i\phi
}(1-t_{1}^{2})\right\vert -2at_{1},
\end{equation*}%
with $\phi =\phi _{1}+\phi _{2}$. As $\left\vert e^{i\phi }\right\vert =1$,
so 
\begin{equation*}
|a_{3}|-|a_{2}|=\left\vert me^{i(\theta -\phi
)}t_{1}^{2}+2rc(1-t_{1}^{2})\right\vert -2at_{1}.
\end{equation*}%
Now 
\begin{align}
|a_{3}|-|a_{2}|=& \sqrt{m^{2}t_{1}^{4}+4mcrt_{1}^{2}(1-t_{1}^{2})\cos
(\theta -\phi )+4c^{2}r^{2}(1-t_{1}^{2})^{2}}-2at_{1},  \notag \\
\geq & \left\vert mt_{1}^{2}-2rc(1-t_{1}^{2})\right\vert -2at_{1},
\label{equ3}
\end{align}%
as $\cos (\theta -\phi )\geq -1$. If $b=c=0$, then 
\begin{equation*}
|a_{3}|-|a_{2}|\geq -2at_{1}\geq -2a=-\frac{B_{1}}{2}.
\end{equation*}%
Now, let $b\neq 0,\ c\neq 0$. Then from \eqref{equ3}, we have two cases: 
\newline
\textbf{B}$_{1}$: If $mt_{1}^{2}-2rc(1-t_{1}^{2})\leq 0$, then 
\begin{equation*}
t_{1}\leq \sqrt{\frac{2rc}{m+2rc}}:=\zeta _{1}.
\end{equation*}%
So, from (\ref{equ3}) and by using the fact, $r\in \lbrack 0,1]$, we have 
\begin{align*}
|a_{3}|-|a_{2}|& \geq -(m+2rc)t_{1}^{2}-2at_{1}+2rc \\
& \geq -(m+2rc)\zeta _{1}^{2}-2a\zeta _{1}+2rc \\
& =-2a\sqrt{\frac{2rc}{m+2rc}}=-2a\sqrt{\frac{r}{\frac{m}{2c}+r}} \\
& \geq -2a\sqrt{\frac{1}{\frac{m}{2c}+1}} \\
& ={-2a}\sqrt{\frac{2c}{m+2c}} \\
& =-\frac{B_{1}}{2}\sqrt{\frac{B_{1}}{B_{1}+|B_{1}^{2}+B_{2}|}}.
\end{align*}%
\textbf{B}$_{2}$: If $mt_{1}^{2}-2rc(1-t_{1}^{2})>0$, then $t_{1}>\zeta _{1}$%
, and we define 
\begin{equation*}
\varphi (t_{1},r)=(m+2rc)t_{1}^{2}-2at_{1}-2rc.
\end{equation*}%
Since, for $a^{2}\leq 2c(m+2c)$ and by using the fact that $r\in \lbrack
0,1] $, we have 
\begin{equation*}
\frac{\partial \varphi }{\partial t_{1}}=2(m+2r{c})t_{1}-2{a}\geq 2\sqrt{%
2rc(m+2rc)}-2a\geq 0.
\end{equation*}%
It implies, $\varphi $ is an increasing function and from (\ref{equ3}), we
have 
\begin{equation*}
|a_{3}|-|a_{2}|\geq -2{a}\sqrt{\frac{2r{c}}{m+2r{c}}}\geq -2{a}\sqrt{\frac{2{%
c}}{m+2{c}}}=-\frac{B_{1}}{2}\sqrt{\frac{B_{1}}{B_{1}+|B_{1}^{2}+B_{2}|}},
\end{equation*}%
as $r\in \lbrack 0,1]$. Now, for $a\geq m+2c$, we deduce 
\begin{equation*}
\frac{\partial \varphi }{\partial t_{1}}=2(m+2r{c})t_{1}-2{a}\leq 2m+4r{c}-2{%
a}\leq 2m+4{c}-2{a}\leq 0.
\end{equation*}%
It implies, $\varphi $ is a decreasing function and from (\ref{equ3}), we
get 
\begin{equation*}
|a_{3}|-|a_{2}|\geq m-2{a}=\frac{\left\vert B_{1}^{2}+B_{2}\right\vert
-3B_{1}}{6}.
\end{equation*}%
At the end, for $a^{2}>2c(m+2c)$ and $a<m+2c$, we have 
\begin{equation*}
\zeta _{1}<\frac{{a}}{m+2{c}}:=\zeta _{2}<1,
\end{equation*}%
and $\frac{\partial \varphi }{\partial r}=-2c(1-t_{1}^{2})\leq 0$ for $r\in
\lbrack 0,1]$ and $t_{1}\in \lbrack \zeta _{1},1]$. Thus 
\begin{equation*}
\varphi (t_{1},r)\geq \varphi (t_{1},1)=(m+2{c})t_{1}^{2}-2{a}t_{1}-2{c}%
=h(t_{1}).
\end{equation*}%
Since $h^{\prime }(t_{1})=0$ yields $t_{1}=\zeta _{2}$. Thus from (\ref{equ3}
) 
\begin{equation*}
|a_{3}|-|a_{2}|\geq -2{c-\frac{{a}^{2}}{m+2{c}}}=-\frac{B_{1}}{6}-\frac{%
3B_{1}^{2}}{8{|B_{1}^{2}+B_{2}|}+8{B_{1}}}.
\end{equation*}%
From above discussion, we have 
\begin{equation*}
|a_{3}|-|a_{2}|\geq 
\begin{cases}
\frac{\left\vert B_{1}^{2}+B_{2}\right\vert -3B_{1}}{6}, & \text{if }\
B_{1}\geq 2|B_{1}^{2}+B_{2}|, \\[8pt] 
-\frac{B_{1}}{2}\sqrt{\frac{B_{1}}{B_{1}+|B_{1}^{2}+B_{2}|}}, & \text{if }\
5B_{1}\leq 4|B_{1}^{2}+B_{2}|, \\[8pt] 
-\frac{B_{1}}{6}-\frac{3B_{1}^{2}}{8{|B_{1}^{2}+B_{2}|}+8{B_{1}}}, & \text{%
otherwise}.%
\end{cases}%
\end{equation*}%
When $B_{1}\geq 2|B_{1}^{2}+B_{2}|$, the inequality is sharp for the
function $f$ defined by \eqref{ma} with $p(z)=(1+z)/(1-z)$.\newline
Let $5B_{1}\leq 4|B_{1}^{2}+B_{2}|$. Then by Lemma \ref{lemma1}, the
equality exists for the function $f$ defined by \eqref{ma}, where $p(z)$ is
given by \eqref{eq001} with 
\begin{equation*}
t_{1}=\sqrt{\frac{{B_{1}}}{{|B_{1}^{2}+B_{2}|}+{B_{1}}}},\quad \text{and}%
\quad t_{2}=%
\begin{cases}
-\frac{B_{1}^{2}+B_{2}}{{|B_{1}^{2}+B_{2}|}}, & \text{if }\
B_{1}^{2}+B_{2}\neq 0, \\[1.2em] 
1, & \text{if }\ B_{1}^{2}+B_{2}=0.%
\end{cases}%
\end{equation*}%
For $B_{1}^{2}+B_{2}=0$, the calculations are obvious and $|a_{3}|-|a_{2}|=-%
\frac{B_{1}}{2}$.\newline
Now for $B_{1}^{2}+B_{2}\neq 0$, 
\begin{align*}
|a_{3}|-|a_{2}|& =\left\vert bp_{1}^{2}+cp_{{2}}|-|ap_{1}\right\vert \\
& =\frac{1}{6}\left\vert \left( B_{1}^{2}+B_{2}\right)
t_{1}^{2}+B_{1}(1-t_{1}^{2})t_{2}\right\vert -\frac{1}{2}\left\vert
B_{1}t_{1}\right\vert ,
\end{align*}%
and after some simplifications, we get 
\begin{equation*}
|a_{3}|-|a_{2}|=-\frac{B_{1}}{2}\sqrt{\frac{B_{1}}{B_{1}+|B_{1}^{2}+B_{2}|}}.
\end{equation*}%
Lastly, for $5B_{1}>4|B_{1}^{2}+B_{2}|$ and $B_{1}<2|B_{1}^{2}+B_{2}|$, the
inequality is sharp for the function $f$ defined by \eqref{ma}, where $p(z)$
is given by \eqref{eq001} with 
\begin{equation*}
t_{1}=\frac{3{B_{1}}}{2{|B_{1}^{2}+B_{2}|}+2{B_{1}}},\quad \text{and}\quad
t_{2}=-\frac{B_{1}^{2}+B_{2}}{{|B_{1}^{2}+B_{2}|}}.
\end{equation*}%
Now 
\begin{align*}
36|bp_{1}^{2}+cp_{2}|^{2}& =\left\vert \left( B_{1}^{2}+B_{2}\right)
t_{1}^{2}+B_{1}(1-t_{1}^{2})t_{2}\right\vert ^{2} \\
& =\left\vert B_{1}^{2}+B_{2}\right\vert ^{2}t_{1}^{4}+2Re\left( B_{1}(%
\overline{B_{1}^{2}+B_{2}})t_{1}^{2}(1-t_{1}^{2})t_{2}\right)
+B_{1}^{2}(1-t_{1}^{2})^{2} \\
& =\left\vert B_{1}^{2}+B_{2}\right\vert ^{2}t_{1}^{4}-2\left\vert
B_{1}^{3}+B_{1}B_{2}\right\vert
t_{1}^{2}(1-t_{1}^{2})+B_{1}^{2}(1-t_{1}^{2})^{2} \\
& =\left( \left\vert B_{1}^{2}+B_{2}\right\vert
t_{1}^{2}-B_{1}(1-t_{1}^{2})\right) ^{2} \\
& =\left( \frac{9B_{1}^{2}}{4{|B_{1}^{2}+B_{2}|}+4{B_{1}}}-B_{1}\right) ^{2}.
\end{align*}%
Since $5B_{1}>4|B_{1}^{2}+B_{2}|$, it implies $\frac{9B_{1}^{2}}{4{%
|B_{1}^{2}+B_{2}|}+4{B_{1}}}-B_{1}>0$ and thus 
\begin{equation*}
|bp_{1}^{2}+cp_{2}|=\frac{3B_{1}^{2}}{8{|B_{1}^{2}+B_{2}|}+8{B_{1}}}-\frac{%
B_{1}}{6}.
\end{equation*}%
Therefore 
\begin{equation*}
|a_{3}|-|a_{2}|=|bp_{1}^{2}+cp_{2}|-|ap_{1}|=-\frac{B_{1}}{6}-\frac{%
3B_{1}^{2}}{8{|B_{1}^{2}+B_{2}|}+8{B_{1}}}.
\end{equation*}%
It completes the proof.
\end{proof}

In this theorem, we present the sharp bounds on $||A_{3}|-|A_{2}||$.

\begin{theorem}
\label{Th2}Let $f\in \mathcal{C}(\Phi )$ be of the form \eqref{1} with $%
B_{1}>0$. Then 
\begin{equation*}
|A_{3}|-|A_{2}|\leq 
\begin{cases}
\frac{{|2B_{1}^{2}-B_{2}|}-3{B_{1}}}{6}, & \text{if }\
|2B_{1}^{2}-B_{2}|\geq 4B_{1}, \\ 
\frac{B_{1}}{6}, & \text{if }\ |2B_{1}^{2}-B_{2}|<4B_{1}.%
\end{cases}%
\end{equation*}%
and 
\begin{equation*}
|A_{3}|-|A_{2}|\geq 
\begin{cases}
\frac{\left\vert 2B_{1}^{2}-B_{2}\right\vert -3B_{1}}{6}, & \text{if }\
B_{1}\geq |4B_{1}^{2}-2B_{2}|, \\[8pt] 
-\frac{B_{1}}{2}\sqrt{\frac{B_{1}}{B_{1}+|2B_{1}^{2}-B_{2}|}}, & \text{if }\
5B_{1}\leq |8B_{1}^{2}-4B_{2}|, \\[8pt] 
-\frac{B_{1}}{6}-\frac{3B_{1}^{2}}{8{|2B_{1}^{2}-B_{2}|}+8{B_{1}}}, & \text{%
otherwise}.%
\end{cases}%
\end{equation*}%
All of these bounds are sharp.
\end{theorem}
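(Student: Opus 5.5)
The plan is to reduce Theorem \ref{Th2} to exactly the same extremal problem solved in the proof of Theorem \ref{Th1}, with the constant $b$ changed. By \eqref{A2} we have $|A_{2}|=|a_{2}|$ and $A_{3}=2a_{2}^{2}-a_{3}$. Substituting the expressions \eqref{a2} for $a_{2}$ and $a_{3}$ in terms of the Carath\'eodory coefficients gives
\begin{equation*}
A_{3}=\frac{B_{1}^{2}}{8}p_{1}^{2}-\left( \frac{-B_{1}+B_{1}^{2}+B_{2}}{24}\right) p_{1}^{2}-\frac{B_{1}}{12}p_{2}=-\left( b^{\prime }p_{1}^{2}+cp_{2}\right) ,
\end{equation*}
where
\begin{equation*}
b^{\prime }:=\frac{-B_{1}+B_{2}-2B_{1}^{2}}{24}
\end{equation*}
and $c=B_{1}/12$ as before. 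Hence
\begin{equation*}
|A_{3}|-|A_{2}|=|b^{\prime }p_{1}^{2}+cp_{2}|-|ap_{1}|,
\end{equation*}
with $a=B_{1}/4$. Inserting Lemma \ref{lemma1} then gives the same form as before:
\begin{equation*}
|A_{3}|-|A_{2}|=|m^{\prime }e^{i\theta }t_{1}^{2}+2c(1-|t_{1}|^{2})t_{2}|-2a|t_{1}|,
\end{equation*}
where now $m^{\prime }=|4b^{\prime }+2c|=|B_{2}-2B_{1}^{2}|/6=|2B_{1}^{2}-B_{2}|/6$.

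The key observation is that the whole argument of Theorem \ref{Th1} used only three inputs: the numbers $a,c>0$, the modulus $m=|4b+2c|$, and the freedom to choose the argument of $t_{2}$ independently. It never used the specific value of $b$. So I would rerun that argument verbatim with $m$ replaced by $m^{\prime }$, which amounts to replacing $|B_{1}^{2}+B_{2}|$ by $|2B_{1}^{2}-B_{2}|$ everywhere.

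\textbf{Upper bound.} Take $t_{1}\in[0,1]$ by rotation invariance and use $|t_{2}|\leq 1$. This gives the quadratic $\Upsilon (x)=(m^{\prime }-2c)x^{2}-2ax+2c$. Its maximum on $[0,1]$ is $\max \{\Upsilon (0),\Upsilon (1)\}$, and the case split is $m^{\prime }\geq 2a$, i.e.\ $|2B_{1}^{2}-B_{2}|\geq 4B_{1}$. This yields $\frac{|2B_{1}^{2}-B_{2}|-3B_{1}}{6}$ or $\frac{B_{1}}{6}$.

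\textbf{Lower bound.} Use $\cos(\theta -\phi )\geq -1$ to reach $|m^{\prime }t_{1}^{2}-2rc(1-t_{1}^{2})|-2at_{1}$. Then split into cases B$_{1}$ and B$_{2}$ with $\zeta _{1}=\sqrt{2rc/(m^{\prime }+2rc)}$ and $\zeta _{2}=a/(m^{\prime }+2c)$. The threshold conditions translate as follows:
\begin{itemize}
\item $a^{2}\leq 2c(m^{\prime }+2c)$ becomes $5B_{1}\leq 4|2B_{1}^{2}-B_{2}|=|8B_{1}^{2}-4B_{2}|$;
\item $a\geq m^{\prime }+2c$ becomes $B_{1}\geq 2|2B_{1}^{2}-B_{2}|=|4B_{1}^{2}-2B_{2}|$.
\end{itemize}
These match the statement. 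The three values $-2a\sqrt{2c/(m^{\prime }+2c)}$, $m^{\prime }-2a$ and $-2c-a^{2}/(m^{\prime }+2c)$ give the three stated bounds.

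\textbf{Sharpness.} This is the step needing the most care, since the phase of $t_{2}$ must be chosen to produce cancellation for the new combination. Up to sign, $A_{3}$ equals
\begin{equation*}
\tfrac{1}{6}\bigl[(B_{2}-2B_{1}^{2})t_{1}^{2}+B_{1}(1-t_{1}^{2})t_{2}\bigr].
\end{equation*}
For the extremal functions built from \eqref{eq001}, I would take $t_{2}=-(B_{2}-2B_{1}^{2})/|2B_{1}^{2}-B_{2}|$, or $t_{2}=1$ when $2B_{1}^{2}=B_{2}$. With this choice the bracket becomes real and equal to $|2B_{1}^{2}-B_{2}|t_{1}^{2}-B_{1}(1-t_{1}^{2})$. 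The value of $t_{1}$ is $\sqrt{B_{1}/(B_{1}+|2B_{1}^{2}-B_{2}|)}$ in the middle case and $3B_{1}/(2|2B_{1}^{2}-B_{2}|+2B_{1})$ in the last case. For the remaining cases use $p(z)=(1+z)/(1-z)$ or $p(z)=(1+z^{2})/(1-z^{2})$, as in Theorem \ref{Th1}.

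The main obstacle is verifying in the last case that the bracket has the correct (positive) sign, so that the absolute value evaluates as claimed. This follows from $5B_{1}>4|2B_{1}^{2}-B_{2}|$ exactly as in Theorem \ref{Th1}.
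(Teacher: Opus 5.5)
Your proposal is correct and is essentially the paper's own proof. The paper likewise reruns the argument of Theorem~\ref{Th1} with $|B_1^2+B_2|$ replaced by $|2B_1^2-B_2|$, and it uses the same extremal data, including $t_2=(2B_1^2-B_2)/|2B_1^2-B_2|$. Your normalization $A_3=-(b'p_1^2+cp_2)$ with $c=B_1/12>0$ is in fact cleaner than the paper's $c=-B_1/12$, which the paper then tacitly treats as positive (e.g.\ writing $2c=B_1/6$).

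Two small corrections:

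\begin{itemize}
\item The upper-bound split is $\Upsilon(1)\ge\Upsilon(0)$, i.e.\ $m'\ge 2a+2c$, not $m'\ge 2a$. The latter would give the threshold $3B_1$. Your stated condition $|2B_1^2-B_2|\ge 4B_1$ is nonetheless the right one.
\item Be careful about copying case B$_2$ verbatim. The paper claims that $a^2\le 2c(m+2c)$ gives $\partial\varphi/\partial t_1\ge 2\sqrt{2rc(m+2rc)}-2a\ge 0$ for all $r\in[0,1]$, and this is false for small $r$. Instead, for $t_1>\sqrt{2c/(m'+2c)}$, first use $\partial\varphi/\partial r\le 0$ to reduce to $r=1$, as the paper itself does in its last subcase, and only then argue monotonicity in $t_1$. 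For $t_1\le\sqrt{2c/(m'+2c)}$ the trivial bound $\ge -2at_1$ already suffices.
\end{itemize}
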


\begin{proof}
From \eqref{A2}, we have 
\begin{equation*}
A_{2}=-\frac{B_{1}}{4}p_{1},\quad \quad A_{3}:=\left( \frac{%
2B_{1}^{2}+B_{1}-B_{2}}{24}\right) p_{1}^{2}-\frac{B_{1}}{12}p_{2}.
\end{equation*}%
Now, by using \eqref{a2}, we get 
\begin{equation*}
|A_{3}|-|A_{2}|=|bp_{1}^{2}+cp_{2}|-|ap_{1}|,
\end{equation*}%
where 
\begin{equation*}
a:=\frac{B_{1}}{4},\quad \quad b:=\left( \frac{2B_{1}^{2}+B_{1}-B_{2}}{24}%
\right) ,\quad \text{and}\quad c:=-\frac{B_{1}}{12}.
\end{equation*}%
By utilizing Lemma \ref{lemma1}, we have 
\begin{align*}
|A_{3}|-|A_{2}|& ={|4bt_{1}^{2}+c(2t_{1}^{2}+2(1-{|t_{1}|}^{2})t_{2})|}-2{%
|at_{1}|} \\
& \leq {m|t_{1}|}^{2}+2{c}\left( 1-{|t_{1}|}^{2}\right) |t_{2}|-2{|at_{1}|},
\end{align*}%
with $m:=|4b+2c|$. As we know, $\mathcal{P}$ is rotationally invariant, it
implies, $t_{1}\in \lbrack 0,1]$ and by using the inequality, $|t_{2}|\leq 1$%
, we get 
\begin{equation*}
|A_{3}|-|A_{2}|\leq \left( {m}-2{c}\right) t_{1}^{2}-2{a}t_{1}+2{c}=\Upsilon
(x),
\end{equation*}%
with 
\begin{equation}
\Upsilon (x)=k_{2}x^{2}+k_{1}x+k_{0},  \label{equ12}
\end{equation}%
where 
\begin{equation*}
k_{2}={m}-2{c},\quad \quad k_{1}=-2{a},\quad \quad k_{0}=2{c}.
\end{equation*}%
Now, there are two cases:\newline
$\mathbf{A}_{1}:$ Let $k_{2}\leq 0$. For $k_{1}<0$, so (\ref{equ12})
implies, $\Upsilon ^{\prime }(x)=2k_{2}x+k_{1}<0$ and thus we have that $%
\Upsilon $ is decreasing. Therefore 
\begin{equation*}
\Upsilon (x)\leq \Upsilon (0)=k_{0}=2c=\frac{B_{1}}{6}.
\end{equation*}%
$\mathbf{A}_{2}:$ If $k_{2}>0$, then $\Upsilon $ is a quadratic function
with positive leading coefficient, thus 
\begin{equation*}
\Upsilon (x)\leq \max \{\Upsilon (0),\Upsilon (1)\}.
\end{equation*}%
Here, if $k_{2}+k_{1}<0$, it implies $\Upsilon
(1)=k_{2}+k_{1}+k_{0}<k_{0}=\Upsilon (0)$ and it yields $\Upsilon (x)\leq
\Upsilon (0)=2c$. On the other hand, if $k_{2}+k_{1}\geq 0$, then $\Upsilon
(1)=k_{2}+k_{1}+k_{0}\geq k_{0}=\Upsilon (0)$. Therefore 
\begin{equation*}
\Upsilon (x)\leq \Upsilon (1)=k_{2}+k_{1}+k_{0}=|4b+2c|-2a=\frac{{%
|2B_{1}^{2}-B_{2}|}-3{B_{1}}}{6}.
\end{equation*}%
Thus, from the above discussion, we have 
\begin{equation*}
|A_{3}|-|A_{2}|\leq 
\begin{cases}
\frac{{|2B_{1}^{2}-B_{2}|}-3{B_{1}}}{6}, & \text{if }\
|2B_{1}^{2}-B_{2}|\geq 4B_{1}, \\ 
\frac{B_{1}}{6}, & \text{if }\ |2B_{1}^{2}-B_{2}|<4B_{1}.%
\end{cases}%
\end{equation*}%
The inequality is sharp for the function $f$ defined by \eqref{ma} with $%
p(z)=(1+z)/(1-z)$, when $|2B_{1}^{2}-B_{2}|\geq 4B_{1}$ and for $%
|2B_{1}^{2}-B_{2}|<4B_{1},$ the extremal function $f$ is given by \eqref{ma}
with $p(z)=(1+z^{2})/(1-z^{2})$.\newline
Now, for the lower bound on $|A_{3}|-|A_{2}|$ by using Lemma \ref{lemma1},
we have 
\begin{equation*}
|A_{3}|-|A_{2}|=\left\vert me^{i\theta }t_{1}^{2}+2ce^{i\phi
_{1}}(1-|t_{1}|^{2})t_{2}\right\vert -2|at_{1}|,
\end{equation*}%
with 
\begin{equation*}
m=\left\vert 4b+2c\right\vert ,\quad \quad \theta =arg(4b+2c),\quad \quad
\phi _{1}=arg(2c).
\end{equation*}%
Since $\mathcal{P}$ possesses the property of rotational invariance, thus we
can have, $t_{1}\in \lbrack 0,1]$ and $t_{2}=re^{i\phi _{2}}$ where $r\in
\lbrack 0,1]$. Therefore 
\begin{equation*}
|A_{3}|-|A_{2}|=\left\vert me^{i\theta }t_{1}^{2}+2rce^{i\phi
}(1-t_{1}^{2})\right\vert -2at_{1},
\end{equation*}%
with $\phi =\phi _{1}+\phi _{2}$. As $\left\vert e^{i\phi }\right\vert =1$,
so 
\begin{equation*}
|A_{3}|-|A_{2}|=\left\vert me^{i(\theta -\phi
)}t_{1}^{2}+2rc(1-t_{1}^{2})\right\vert -2at_{1}.
\end{equation*}%
It implies 
\begin{align}
|A_{3}|-|A_{2}|=& \sqrt{m^{2}t_{1}^{4}+4mcrt_{1}^{2}(1-t_{1}^{2})\cos
(\theta -\phi )+4c^{2}r^{2}(1-t_{1}^{2})^{2}}-2at_{1},  \notag \\
\geq & \left\vert mt_{1}^{2}-2rc(1-t_{1}^{2})\right\vert -2at_{1},
\label{equ13}
\end{align}%
since $\cos (\theta -\phi )\geq -1$. Let $b=c=0$. Then 
\begin{equation*}
|A_{3}|-|A_{2}|\geq -2at_{1}\geq -2a=-\frac{B_{1}}{2}.
\end{equation*}%
On the other hand, let $b\neq 0,\ c\neq 0$. Then from \eqref{equ13}, there
are two cases: \newline
\textbf{B}$_{1}$: Let $mt_{1}^{2}-2rc(1-t_{1}^{2})\leq 0$. Then 
\begin{equation*}
t_{1}\leq \sqrt{\frac{2rc}{m+2rc}}:=\zeta _{1}.
\end{equation*}%
By using (\ref{equ13}) and the fact that, $r\in \lbrack 0,1]$, we deduce 
\begin{align*}
|A_{3}|-|A_{2}|& \geq -(m+2rc)t_{1}^{2}-2at_{1}+2rc \\
& \geq -(m+2rc)\zeta _{1}^{2}-2a\zeta _{1}+2rc \\
& =-2a\sqrt{\frac{2rc}{m+2rc}}=-2a\sqrt{\frac{r}{\frac{m}{2c}+r}} \\
& \geq -2a\sqrt{\frac{1}{\frac{m}{2c}+1}} \\
& ={-2a}\sqrt{\frac{2c}{m+2c}} \\
& =-\frac{B_{1}}{2}\sqrt{\frac{B_{1}}{B_{1}+|2B_{1}^{2}-B_{2}|}}.
\end{align*}%
\textbf{B}$_{2}$: Now, let $mt_{1}^{2}-2rc(1-t_{1}^{2})>0$. Then $%
t_{1}>\zeta _{1}$, and we introduce a function $\varphi $ such that 
\begin{equation*}
\varphi (t_{1},r)=(m+2rc)t_{1}^{2}-2at_{1}-2rc.
\end{equation*}%
Now, by using $a^{2}\leq 2c(m+2c)$ and the fact that $r\in \lbrack 0,1]$, we
get 
\begin{equation*}
\frac{\partial \varphi }{\partial t_{1}}=2(m+2r{c})t_{1}-2{a}\geq 2\sqrt{%
2rc(m+2rc)}-2a\geq 0.
\end{equation*}%
It implies, $\varphi $ is an increasing function and by using (\ref{equ13}),
we get 
\begin{equation*}
|A_{3}|-|A_{2}|\geq -2{a}\sqrt{\frac{2r{c}}{m+2r{c}}}\geq -2{a}\sqrt{\frac{2{%
\ c}}{m+2{c}}}=-\frac{B_{1}}{2}\sqrt{\frac{B_{1}}{B_{1}+|2B_{1}^{2}-B_{2}|}},
\end{equation*}%
since $r\in \lbrack 0,1]$. on the other hand, if $a\geq m+2c$, then we get 
\begin{equation*}
\frac{\partial \varphi }{\partial t_{1}}=2(m+2r{c})t_{1}-2{a}\leq 2m+4r{\ c}%
-2{a}\leq 2m+4{c}-2{a}\leq 0.
\end{equation*}%
It implies, $\varphi $ is a decreasing function and from (\ref{equ13}), we
get 
\begin{equation*}
|A_{3}|-|A_{2}|\geq m-2{a}=\frac{\left\vert 2B_{1}^{2}-B_{2}\right\vert
-3B_{1}}{6}.
\end{equation*}%
Lastly, when $a^{2}>2c(m+2c)$ and $a<m+2c$, we get 
\begin{equation*}
\zeta _{1}<\frac{{a}}{m+2{c}}:=\zeta _{2}<1,
\end{equation*}%
and $\frac{\partial \varphi }{\partial r}=-2c(1-t_{1}^{2})\leq 0$ for $r\in
\lbrack 0,1]$ and $t_{1}\in \lbrack \zeta _{1},1]$. Therefore 
\begin{equation*}
\varphi (t_{1},r)\geq \varphi (t_{1},1)=(m+2{c})t_{1}^{2}-2{a}t_{1}-2{c}%
=h(t_{1}).
\end{equation*}%
As $h^{\prime }(t_{1})=0$ implies $t_{1}=\zeta _{2}$. Therefore by using (%
\ref{equ13}), we deduce 
\begin{equation*}
|A_{3}|-|A_{2}|\geq -2{c-\frac{{a}^{2}}{m+2{c}}}=-\frac{B_{1}}{6}-\frac{%
3B_{1}^{2}}{8{|2B_{1}^{2}-B_{2}|}+8{B_{1}}}.
\end{equation*}%
Now, all the above discussion implies 
\begin{equation*}
|A_{3}|-|A_{2}|\geq 
\begin{cases}
\frac{\left\vert 2B_{1}^{2}-B_{2}\right\vert -3B_{1}}{6}, & \text{if }\
B_{1}\geq |4B_{1}^{2}-2B_{2}|, \\[8pt] 
-\frac{B_{1}}{2}\sqrt{\frac{B_{1}}{B_{1}+|2B_{1}^{2}-B_{2}|}}, & \text{if }\
5B_{1}\leq |8B_{1}^{2}-4B_{2}|, \\[8pt] 
-\frac{B_{1}}{6}-\frac{3B_{1}^{2}}{8{|2B_{1}^{2}-B_{2}|}+8{B_{1}}}, & \text{%
otherwise}.%
\end{cases}%
\end{equation*}%
For $B_{1}\geq |4B_{1}^{2}-2B_{2}|$, the equality exits for the function $f$
given by \eqref{ma} with $p(z)=(1+z)/(1-z)$.\newline
Now, Let $5B_{1}\leq |8B_{1}^{2}+4B_{2}|$. Then Lemma \ref{lemma1} implies
that the bound is sharp for the function $f$ given by \eqref{ma}, with $p(z)$
is defined by \eqref{eq001} where 
\begin{equation*}
t_{1}=\sqrt{\frac{{B_{1}}}{{|2B_{1}^{2}-B_{2}|}+{B_{1}}}},\quad \text{and}%
\quad t_{2}=%
\begin{cases}
\frac{2B_{1}^{2}-B_{2}}{{|2B_{1}^{2}-B_{2}|}}, & \text{if }\ 2B_{1}^{2}\neq
B_{2}, \\[1.2em] 
1, & \text{if }\ 2B_{1}^{2}=B_{2}.%
\end{cases}%
\end{equation*}%
Let $2B_{1}^{2}=B_{2}$. Then it is obvious $|A_{3}|-|A_{2}|=-\frac{B_{1}}{2}$%
.\newline
Now, let $2B_{1}^{2}\neq B_{2}$. Then 
\begin{align*}
|A_{3}|-|A_{2}|& =\left\vert bp_{1}^{2}+cp_{{2}}|-|ap_{1}\right\vert \\
& =\frac{1}{6}\left\vert \left( 2B_{1}^{2}-B_{2}\right)
t_{1}^{2}-B_{1}(1-t_{1}^{2})t_{2}\right\vert -\frac{1}{2}\left\vert
B_{1}t_{1}\right\vert
\end{align*}%
and after some calculations, we deduce 
\begin{equation*}
|A_{3}|-|A_{2}|=-\frac{B_{1}}{2}\sqrt{\frac{B_{1}}{B_{1}+|2B_{1}^{2}-B_{2}|}}%
.
\end{equation*}%
At the end, for $5B_{1}>|8B_{1}^{2}-4B_{2}|$ and $B_{1}<|4B_{1}^{2}-2B_{2}|$%
, the extremal function $f$ is given by \eqref{ma}, where $p(z)$ is defined
by \eqref{eq001} with 
\begin{equation*}
t_{1}=\frac{3{B_{1}}}{{|4B_{1}^{2}-2B_{2}|}+2{B_{1}}},\quad \text{and}\quad
t_{2}=\frac{2B_{1}^{2}-B_{2}}{{|2B_{1}^{2}-B_{2}|}}.
\end{equation*}%
Now 
\begin{align*}
36|bp_{1}^{2}+cp_{2}|^{2}& =\left\vert \left( 2B_{1}^{2}-B_{2}\right)
t_{1}^{2}-B_{1}(1-t_{1}^{2})t_{2}\right\vert ^{2} \\
& =\left\vert 2B_{1}^{2}-B_{2}\right\vert ^{2}t_{1}^{4}-2Re\left( B_{1}(%
\overline{2B_{1}^{2}-B_{2}})t_{1}^{2}(1-t_{1}^{2})t_{2}\right)
+B_{1}^{2}(1-t_{1}^{2})^{2} \\
& =\left\vert 2B_{1}^{2}-B_{2}\right\vert ^{2}t_{1}^{4}-2\left\vert
2B_{1}^{3}-B_{1}B_{2}\right\vert
t_{1}^{2}(1-t_{1}^{2})+B_{1}^{2}(1-t_{1}^{2})^{2} \\
& =\left( \left\vert 2B_{1}^{2}-B_{2}\right\vert
t_{1}^{2}-B_{1}(1-t_{1}^{2})\right) ^{2} \\
& =\left( \frac{9B_{1}^{2}}{{|8B_{1}^{2}-4B_{2}|}+4{B_{1}}}-B_{1}\right)
^{2}.
\end{align*}%
As, $5B_{1}>|8B_{1}^{2}-4B_{2}|$, it gives $\frac{9B_{1}^{2}}{{%
|8B_{1}^{2}-4B_{2}|}+4{B_{1}}}-B_{1}>0$, therefore 
\begin{equation*}
|bp_{1}^{2}+cp_{2}|=\frac{3B_{1}^{2}}{8{|2B_{1}^{2}-B_{2}|}+8{B_{1}}}-\frac{%
B_{1}}{6}.
\end{equation*}%
It implies 
\begin{equation*}
|A_{3}|-|A_{2}|=|bp_{1}^{2}+cp_{2}|-|ap_{1}|=-\frac{B_{1}}{6}-\frac{%
3B_{1}^{2}}{8{|2B_{1}^{2}-B_{2}|}+8{B_{1}}}.
\end{equation*}%
It completes the proof.
\end{proof}

\begin{remark}
From Theorem \ref{Th1} and Theorem \ref{Th2}, it is seen that the upper
bounds on $|a_{3}|-|a_{2}|$ and $|A_{3}|-|A_{2}|$ are the same when $%
4B_{1}>\max \left\lbrace |B_{1}^{2}+B_{2}|, \ |2B_{1}^{2}-B_{2}|\right\rbrace $.
\end{remark}

Now we present example of functions in which this property hold.
\begin{example}
Let

$\Phi _{e}(z)=e^{z}=1+z+\frac{1}{2}z^{2}+\cdots ,$

$\Phi _{\sin }(z)=1+\sin (z)=1+z-\frac{1}{6}z^{3}+\cdots ,$

$\Phi _{L}(z)=\sqrt{1+z}=1+\frac{1}{2}z-\frac{1}{8}z^{2}+\cdots ,$

$\Phi _{l}(z)=z+\sqrt{1+z^{2}}=1+z+\frac{1}{2}z^{2}+\cdots ,$

$\Phi _{RL}(z)=\sqrt{2}-(\sqrt{2}-1)\sqrt{\frac{1-z}{1+2(\sqrt{2}-1)z}}=1+%
\frac{5-3\sqrt{2}}{2}z+\frac{71-51\sqrt{2}}{8}z^{2}+\cdots ,$

$\Phi _{C}(z)=1+\frac{4z}{3}+\frac{2z^{2}}{3},$

$\Phi _{\mathcal{BS}}(z)=1+\frac{z}{1-\alpha z^{2}}=1+z+\alpha z^{3}+\cdots $

Then these functions are the subordinating functions for the classes $%
\mathcal{C}_{e},$ $\mathcal{C}_{s},$ $\mathcal{C}_{L},$ $\mathcal{C}_{l},\ 
\mathcal{C}_{RL},\mathcal{C}_{C},$ therefore the upper bounds on $%
|a_{3}|-|a_{2}|$ and $|A_{3}|-|A_{2}|$ are the same for functions in these
classes.
\end{example}

\begin{remark}
	From Theorem \ref{Th1} and Theorem \ref{Th2}, it can also be observed that the lower bounds on $|a_{3}|-|a_{2}|$ and $|A_{3}|-|A_{2}|$ are the same when $ B_{2}=\frac{B_{1}^2}{2}.$
\end{remark}

In the following example we present a few functions in which this property hold.
\begin{example}
Let

$\Phi _{e}(z)=e^{z}=1+z+\frac{1}{2}z^{2}+\cdots ,$

$\Phi _{l}(z)=z+\sqrt{1+z^{2}}=1+z+\frac{1}{2}z^{2}+\cdots.$

Then these functions are the subordinating functions for the classes $%
\mathcal{C}_{e}$ and $\mathcal{C}_{l}$, thus the lower bounds on $%
|a_{3}|-|a_{2}|$ and $|A_{3}|-|A_{2}|$ are same for functions in these
classes.
\end{example}


\begin{thebibliography}{99}
\bibitem{ali} R. Ali, M. Raza, On the difference of coefficients of
non-Bazilevi\v{c} functions, Bull. Iran. Math. Soc. 51(6) (2025) 78.

\bibitem{c2} L. De Branges, A proof of the Bieberbach conjecture, Acta.
Math. 154(1-2) (1985) 137--152.

\bibitem{19} N. E. Cho, B. Kowalczyk, A. Lecko, B. \'{S}miarowska, On the
fourth and fifth coefficients in the Carath\'{e}odory class, Filomat 34(6)
(2020), 2061--2072.


\bibitem{cho} N. E. Cho, Y. J. Sim, D. K. Thomas, On the difference of
coefficients of Bazilevi\v{c} functions, Comput. Methods Funct. Theory 19(4)
(2019) 671--685.

\bibitem{c1} P. L. Duren, Univalent Functions, Grundlehren der
Mathematischen Wissenschaften, Band 259, Springer-Verlag, New York, Berlin,
Heidelberg and Tokyo (1983).


\bibitem{c4} A. Z. Grinspan, Improved bounds for the difference of adjacent
coefficients of univalent functions (Russian), Questions in the mordern
theory of functions (Novosibirsk), Sib. Inst. Mat. 38 (1976) 41--45.

\bibitem{c3} W. K. Hayman, On successive coefficients of univalent
functions, J. London. Math. Soc. 38(1) (1963) 228--243, 1963.

\bibitem{c5} W. K. Hayman, Multivalent Functions, 2nd ed.; Cambridge
University Press: New York, NY, USA, 1994.

\bibitem{lina} L. P. Ilina, The relative growth of nearby coefficients of
schlicht functions. Mat. Zametki 4 (1968), 715--722.



\bibitem{koepf} W. Koepf, On the Fekete--Szego problem for close to convex
functions, Proc. Am. Math. Soc. 101 (1987), 89--95.

\bibitem{leu} Y. Leung, Successive coefficients of starlike functions. Bull.
Lond. Math. Soc. 10 (1978), 193--196.

\bibitem{lib1} R. J. Libera, E. J. Zlotkiewicz, \textit{Early coefficients
of the inverse of a regular convex function}, Proc. Amer. Math. Soc. 85(2)
(1982), 225--230.

\bibitem{11} W. C. Ma, D. Minda, A unified treatment of some special classes
of univalent functions, Proceedings of the Conference on Complex
Analysis(Tianjin, 1992), 157- 169, Conf. Proc. Lecture Notes Anal., I, Int.
Press, Cambridge, MA, 1994.



\bibitem{Ming} L. Ming, T. Sugawa, A note on successive coefficients of
convex functions', Comput. Methods Funct. Theory 17(2) (2017), 179-193.

\bibitem{ob} M. Obradovic, D. K. Thomas, N. Tuneski, On the difference of
coefficients of univalent functions, Filomat 35(11) (2021), 3653-3661.


\bibitem{Raza} U. Raza, M. Raza, On the difference of coefficients of class $%
\mathcal{U}\left( \alpha ,\lambda \right) $, Bull. Malays. Math. Sci. Soc.
48 (2025), 182.


\bibitem{raza3} M. Raza, A. Riaz, D. K. Thomas, Invariance of convex
functions associated with a cardioid domain. Filomat, 39(20) (2025),
6899-6918.


\bibitem{raza4} M. Raza, D. K. Thomas, A. Riaz, Coefficient estimates for
starlike and convex functions related to sigmoid functions, Ukrainian Math.
J. 75 (2023), 782--799


\bibitem{m30} Y. J. Sim, D. K. Thomas, On the difference of inverse
coefficients of univalent functions, Symmetry 12(12) (2020) 2040.

%

\bibitem{m37} D. K. Thomas, N. Tuneski, A. Vasudevarao, Univalent Functions:
A Primer, De Gruyter Stud. Math. 69, Berlin, Boston, 2018.

\bibitem{DK} D. K. Thomas and S. Verma, \textit{Invariance of the
coefficients of strongly convex functions}, Bull. Aust. Math. Soc. 95(3)
(2017), 436--445.
\end{thebibliography}
\end{document}